\newtheorem{theorem}{Theorem}[section]
\newtheorem{lemma}[theorem]{Lemma}
\theoremstyle{definition}
\numberwithin{equation}{section}
\begin{document}

\title[A Quillen model category structure on some categories of comonoids]
{A Quillen model category structure on some categories of comonoids}
\author{Alexandru E. Stanculescu}
\address{Department of Mathematics and Statistics, McGill University, 805 Sherbrooke Str. West, Montr\'eal,
Qu\'ebec, Canada, H3A 2K6} \email{alemst@gmail.com}
%\thanks{}

%\date{December 1st, 2007}

\begin{abstract}
We prove that for certain monoidal (Quillen) model categories,
the category of comonoids therein also admits a model structure.
\end{abstract}

\maketitle

\section{Introduction}
A \emph{monoidal model category} is a closed symmetric monoidal category which
admits a Quillen model category structure compatible in a certain sense with the monoidal
product \cite{Ho},\cite{SS}. The majority of the natural occurring examples of model categories are monoidal
model categories. In \cite{SS}, the authors gave a sufficient condition which ensured that the category of
monoids in a monoidal model category admits a model structure, extended in an appropriate sense from the base
category. This condition was called \emph{the monoid axiom}, and it is satisfied in many examples.

Dually, one can consider comonoids in a monoidal category which has a model structure
and ask for a model structure for comonoids, somehow inherited from the base category. We were not
able to find in the literature a general result along these lines. The situation turns out to be
more complicated than with monoids. In this note we give a (very) partial answer to this problem. We prove
\begin{theorem}
Let $\mathcal{E}$ be a symmetric monoidal category with unit $I$ and let $Comon(\mathcal{E})$ be the category
of (coassociative and counital) comonoids in $\mathcal{E}$. We assume that

$(i)$ $\mathcal{E}$ is locally presentable, abelian and the monoidal product
preserves colimits and finite limits in each variable;

$(ii)$ $\mathcal{E}$ has two classes of maps $\mathrm{W}$ and $\mathrm{Cof}$ such that
$\mathrm{Cof}$ and the class of monomorphisms of $\mathcal{E}$ are the cofibrations of two model
structures on $\mathcal{E}$ with the same class $\mathrm{W}$ of weak equivalences;
furthermore, either of the two model structures is cofibrantly generated;

$(iii)$ the pushout-product axiom between the two model structures holds:
if $i:K\rightarrow L$ belongs to $\mathrm{Cof}$ and $i':X\rightarrow Y$ is a monomorphism,
then the canonical map $$K \otimes Y\underset{K\otimes X}\bigcup L\otimes X\longrightarrow L\otimes Y$$
is a monomorphism, which is a weak equivalence if either one of $i$ or $i'$ is;

$(iv)$ $I$ is $\mathrm{Cof}$-cofibrant and $\mathcal{E}$ has a coalgebra interval,
by which we mean a factorisation of the codiagonal
\[
   \xymatrix{
I\sqcup I \ar[rr]^{\bigtriangledown} \ar[dr]_{i_{0}\sqcup i_{1}} & & I\\
& Cyl(I) \ar[ur]_{p}\\
  }
  \]
such that $i_{0}\sqcup i_{1}$ belongs to $\mathrm{Cof}$, $p$ is a weak equivalence and the whole
diagram lives in $Comon(\mathcal{E})$.

Then $Comon(\mathcal{E})$ admits a model category structure in which a map is
a weak equivalence (resp. cofibration) if and only if the underlying map is a weak
equivalence (resp. monomorphism) in $\mathcal{E}$.
\end{theorem}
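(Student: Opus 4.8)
The plan is to realise the desired structure as a \emph{left-induced} model structure along the forgetful functor $U\colon Comon(\mathcal{E})\to\mathcal{E}$, taking the base to be the model structure on $\mathcal{E}$ whose cofibrations are the monomorphisms. First I would dispatch the categorical preliminaries. Since $\mathcal{E}$ is locally presentable and the tensor preserves colimits, $U$ creates colimits and admits a right adjoint (the cofree comonoid functor), and $Comon(\mathcal{E})$ is again locally presentable, hence bicomplete. Because $\mathcal{E}$ is abelian, its zero object is the initial object of $Comon(\mathcal{E})$ and $0\to C$ is a monomorphism for every comonoid $C$; consequently \emph{every} object is cofibrant. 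Finally, as $\mathcal{E}$ is symmetric monoidal the tensor of two comonoids is a comonoid, so for each $C$ the object $C\otimes Cyl(I)$ is a comonoid which, together with the maps induced by $i_{0},i_{1}$ and $p$, furnishes a functorial cylinder for $C$.

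Next I would put $\mathrm{W}_{Comon}=U^{-1}(\mathrm{W})$, declare the cofibrations to be $U^{-1}(\text{monomorphisms})$, and declare the fibrations to be the maps with the right lifting property (RLP) against trivial cofibrations. Two-out-of-three and closure under retracts for $\mathrm{W}_{Comon}$, and stability of cofibrations under pushout, transfinite composition and retract, all follow immediately because $U$ creates colimits and monomorphisms and $\mathrm{W}$ behave well in the Grothendieck abelian category $\mathcal{E}$. The two weak factorization systems $(\mathrm{Cof},\mathrm{Cof}^{\square})$ and $(\mathrm{Cof}\cap\mathrm{W}_{Comon},\mathrm{Fib})$ I would then produce by an accessibility argument: both left classes are of the form $U^{-1}(\mathcal{L})$ for an accessible (indeed cofibrantly generated) weak factorization system $(\mathcal{L},\mathcal{R})$ on $\mathcal{E}$, and left-inducing such a system along a colimit-preserving functor between locally presentable categories again yields an accessible weak factorization system with functorial factorisations (via the theory of accessible weak factorization systems of Makkai and Rosick\'y, or a direct small object argument in $Comon(\mathcal{E})$).

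By the recognition principle of Joyal and Tierney, it then suffices to verify the single \emph{acyclicity} condition: every map in $\mathrm{Cof}^{\square}$ (the maps with the RLP against all cofibrations) lies in $\mathrm{W}_{Comon}$. Granting this, the identity $\mathrm{Cof}^{\square}=\mathrm{Fib}\cap\mathrm{W}_{Comon}$ follows by the usual retract argument and the two systems assemble into the asserted model structure. I expect this condition to be the main obstacle, and the place where all the monoidal hypotheses are consumed. To prove it, take $g\colon X\to Y$ with the RLP against all cofibrations. Since $0\to Y$ is a cofibration, a lift in the evident square gives a section $s$ of $g$. The map $X\sqcup X\to X\otimes Cyl(I)$ is a cofibration, being the underlying map of the pushout-product of $i_{0}\sqcup i_{1}\in\mathrm{Cof}$ with the monomorphism $0\to X$, hence a monomorphism by hypothesis $(iii)$; lifting $(1_{X},sp)$ against $g$ along it produces a cylinder homotopy $H$ with $Hi_{0}=1_{X}$ and $Hi_{1}=sp$.

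It remains to convert this homotopy datum into $Ug\in\mathrm{W}$. Writing $q=X\otimes p\colon X\otimes Cyl(I)\to X\otimes I\cong X$, one has $qi_{0}=qi_{1}=1_{X}$ because $p\,i_{0}=p\,i_{1}=1_{I}$; so if $q\in\mathrm{W}_{Comon}$ then $i_{0},i_{1}\in\mathrm{W}_{Comon}$, whence $H\in\mathrm{W}_{Comon}$ and $sp=Hi_{1}\in\mathrm{W}_{Comon}$, and a retract argument (using $gs=1_{Y}$) then forces $g\in\mathrm{W}_{Comon}$. Thus everything reduces to showing $X\otimes p$ is a weak equivalence for every comonoid $X$, and here I would invoke Ken Brown's lemma. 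By hypothesis $(iii)$, and using that $0\to X$ is a monomorphism, the functor $-\otimes X$ sends $\mathrm{Cof}$-cofibrations (resp. trivial ones) to monomorphisms (resp. trivial ones), so $-\otimes X$ is a left Quillen functor from the $\mathrm{Cof}$-model structure to the monomorphism model structure, which share the class $\mathrm{W}$. Since $I$ is $\mathrm{Cof}$-cofibrant by $(iv)$ and $Cyl(I)$ is $\mathrm{Cof}$-cofibrant because $0\to I\sqcup I\to Cyl(I)$ lies in $\mathrm{Cof}$, the weak equivalence $p\colon Cyl(I)\to I$ runs between $\mathrm{Cof}$-cofibrant objects, and Ken Brown's lemma yields $X\otimes p\in\mathrm{W}$. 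This closes the acyclicity condition and completes the argument.
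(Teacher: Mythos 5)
Your proposal is correct in substance, and its homotopy-theoretic core is the same as the paper's: the paper also verifies the acyclicity condition (its condition c1) via what it calls the dual Quillen path-object argument (2.6), using that every comonoid is cofibrant, that $C\otimes Cyl(I)$ is a functorial cylinder whose structure map is a cofibration by the pushout--product hypothesis $(iii)$ applied to $0\to C$, and that $C\otimes p$ is a weak equivalence because $-\otimes C$ is left Quillen from the $\mathrm{Cof}$-structure to the monomorphism structure. (Minor slip: your homotopy should be between $1_X$ and $sg$, not $sp$.) Where you genuinely diverge is in producing the weak factorisation systems on $Comon(\mathcal{E})$. The paper does this by hand: it shows every comonoid is a $\kappa$-filtered colimit of its $\kappa$-presentable $\mathcal{E}$-subobjects (Lemma 2.3), takes as generating set the cofibrations with $\kappa$-presentable codomain, and proves by a Zorn's lemma argument that lifting against this set forces lifting against all cofibrations (Lemma 2.4); the key input is the intersection lemma $A\otimes B=(A\otimes Y)\cap(X\otimes B)$ proved with the nine-lemma (Lemma 2.5), which is where the abelian and left-exactness hypotheses do their real work, and it then concludes with Smith's recognition theorem. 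You instead invoke the left-lifting of accessible weak factorisation systems along a left adjoint between locally presentable categories, followed by the Joyal--Tierney recognition principle. That lifting theorem is true and your appeal to it is legitimate, so your route is shorter and more conceptual, and it makes clearer that the abelian hypothesis is needed mainly for cofibrancy of all objects and for the pushout-product computations; but it outsources exactly the hardest part of the paper to machinery (Makkai--Rosick\'y and its descendants) far heavier than anything the paper uses, and your parenthetical fallback of ``a direct small object argument in $Comon(\mathcal{E})$'' is not available as stated, since the small object argument requires a \emph{set} of generators for $U^{-1}(\mathrm{monos})$ and producing one is precisely the content of the paper's Lemmas 2.3--2.5.
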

An analogue of 1.1 for the category of comodules over a comonoid
in $\mathcal{E}$ is presented in section 3.

One of the motivations for writing this note was the paper \cite{GG}. In \cite{AC}, the authors
extended the main result of \cite{GG} to the category of cooperads, or $F_2$-comonoids,
in the category of non-negatively graded chain complexes of vector spaces. We do not know whether
the technique used in this paper would provide a model structure on the category of cooperads
in $\mathcal{E}$.

\section{Proof of theorem 1.1}
In order to prove theorem 1.1 we shall use two results of J.H. Smith, recalled below.
\begin{theorem}(\cite{Bek}, Thm. 1.7) Let $\mathcal{E}$ be a locally
presentable category, $\mathrm{W}$ a full accessible subcategory of
$\mathrm{Mor}(\mathcal{E})$, and $\mathbf{I}$ a set of morphisms of
$\mathcal{E}$. Suppose they satisfy:

$c0$: $\mathrm{W}$ has the three-for-two property.

$c1$: $\mathrm{inj}(\mathbf{I})\subseteq \mathrm{W}$.

$c2$: The class $\mathrm{cof}(\mathbf{I})\cap \mathrm{W}$ is closed
under transfinite composition and under pushout.

Then setting weak equivalences:=$\mathrm{W}$,
cofibrations:=$\mathrm{cof}(\mathbf{I})$ and
fibrations:=$\mathrm{inj}(\mathrm{cof}(\mathbf{I})\cap \mathrm{W}$),
one obtains a cofibrantly generated model structure on
$\mathcal{E}$.
\end{theorem}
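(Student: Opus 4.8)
The plan is to verify the model category axioms by exhibiting two generating sets and feeding them into Quillen's small object argument, which is available because $\mathcal{E}$ is locally presentable: this gives completeness and cocompleteness (so the limit/colimit axiom is immediate) and legitimises the small object argument for any \emph{set} of maps. Write $\mathcal{C}=\mathrm{cof}(\mathbf{I})$ for the prospective cofibrations and $\mathcal{F}=\mathrm{inj}(\mathcal{C}\cap\mathrm{W})$ for the prospective fibrations. The first step is to identify the trivial fibrations: I claim $\mathcal{F}\cap\mathrm{W}=\mathrm{inj}(\mathbf{I})$. The inclusion $\mathrm{inj}(\mathbf{I})\subseteq\mathcal{F}\cap\mathrm{W}$ is immediate from $c1$ together with $\mathrm{inj}(\mathbf{I})=\mathrm{inj}(\mathcal{C})\subseteq\mathrm{inj}(\mathcal{C}\cap\mathrm{W})$. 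For the reverse inclusion I would take $p\in\mathcal{F}\cap\mathrm{W}$, factor it by the small object argument on $\mathbf{I}$ as $p=q\circ i$ with $i\in\mathcal{C}$ and $q\in\mathrm{inj}(\mathbf{I})$; then $q\in\mathrm{W}$ by $c1$ and $i\in\mathrm{W}$ by $c0$, so $i$ is a trivial cofibration and $p\in\mathcal{F}$ lifts against it. The standard retract argument then exhibits $p$ as a retract of $q$, whence $p\in\mathrm{inj}(\mathbf{I})$.

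Granting this, the small object argument on $\mathbf{I}$ supplies the factorisation into a cofibration followed by a trivial fibration, and the lifting of cofibrations against trivial fibrations holds by the very definition of $\mathrm{cof}(\mathbf{I})$ and $\mathrm{inj}(\mathbf{I})$. Closure of the three classes under retracts must also be checked: $\mathcal{C}=\mathrm{cof}(\mathbf{I})$ and $\mathcal{F}=\mathrm{inj}(\mathcal{C}\cap\mathrm{W})$ are retract-closed for formal reasons, while retract-closure of $\mathrm{W}$ I would deduce from accessibility, since a retract is the colimit of the countable sequential diagram of its splitting idempotent, hence a $\kappa$-filtered colimit, and $\mathrm{W}$, being accessibly embedded in $\mathrm{Mor}(\mathcal{E})$, is closed under such colimits.

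The heart of the proof, and the main obstacle, is the factorisation into a trivial cofibration followed by a fibration: I must produce a \emph{set} $\mathbf{J}$ of trivial cofibrations with $\mathrm{cof}(\mathbf{J})=\mathcal{C}\cap\mathrm{W}$, for then the small object argument on $\mathbf{J}$ yields the remaining factorisation and, since $\mathrm{inj}(\mathbf{J})=\mathrm{inj}(\mathcal{C}\cap\mathrm{W})=\mathcal{F}$, the remaining lifting property as well; this $\mathbf{J}$ simultaneously witnesses cofibrant generation. I would fix a regular cardinal $\kappa$ so large that $\mathcal{E}$ is locally $\kappa$-presentable, the domains and codomains of the maps in $\mathbf{I}$ are $\kappa$-presentable, and $\mathrm{W}$ is $\kappa$-accessible and closed under $\kappa$-filtered colimits in $\mathrm{Mor}(\mathcal{E})$, and then set $\mathbf{J}$ to be a set of representatives for the trivial cofibrations whose domain and codomain are $\kappa$-presentable. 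The inclusion $\mathrm{cof}(\mathbf{J})\subseteq\mathcal{C}\cap\mathrm{W}$ follows from $c2$ and the retract-closure of $\mathrm{W}$ just established. The difficult inclusion $\mathcal{C}\cap\mathrm{W}\subseteq\mathrm{cof}(\mathbf{J})$ I would reduce, by a factorise-and-retract manoeuvre, to the key lemma $\mathrm{inj}(\mathbf{J})\cap\mathrm{W}\subseteq\mathrm{inj}(\mathbf{I})$: given $u\in\mathcal{C}\cap\mathrm{W}$, factor it through the small object argument on $\mathbf{J}$ as $u=q\circ j$ with $j\in\mathrm{cof}(\mathbf{J})$ and $q\in\mathrm{inj}(\mathbf{J})$; then $q\in\mathrm{W}$ by $c0$, the key lemma makes $q$ a trivial fibration, and letting $u\in\mathcal{C}$ lift against $q$ realises $u$ as a retract of $j$.

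Proving the key lemma is where the accessibility of $\mathrm{W}$ does its essential work, and where I expect the real difficulty to lie. Given $q\in\mathrm{inj}(\mathbf{J})\cap\mathrm{W}$ one factors $q=q'\circ j'$ through the small object argument on $\mathbf{I}$, so that $q'\in\mathrm{inj}(\mathbf{I})$ and $j'\in\mathcal{C}\cap\mathrm{W}$ by $c0$ and $c1$; it then suffices to lift $q$ against the single trivial cofibration $j'$, after which the retract argument places $q$ in $\mathrm{inj}(\mathbf{I})$. The decisive point is that such a relatively $\mathbf{I}$-cellular trivial cofibration already lies in $\mathrm{cof}(\mathbf{J})$: conceptually, $\mathcal{C}\cap\mathrm{W}$ is an accessible class of morphisms, being the intersection of the accessible class $\mathrm{cof}(\mathbf{I})$ with the accessible subcategory $\mathrm{W}$, and a class that is simultaneously accessible and closed under pushout, transfinite composition and retracts is generated by the set of its members with $\kappa$-presentable domain and codomain. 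Making this precise amounts to organising the cell attachments of $j'$ so that its membership in $\mathrm{W}$ is detected on the $\kappa$-presentable sub-cell-complexes, exhibiting $j'$ as a transfinite composite of pushouts of maps in $\mathbf{J}$; this ``smallness of weak equivalences'' is exactly the solution-set content of Smith's theorem and is the reason the accessibility hypothesis on $\mathrm{W}$ cannot be dropped. Once it is in hand, every trivial cofibration lifts against every fibration, all factorisation and lifting axioms hold, the three classes are retract-closed, and $\mathbf{I}$ together with $\mathbf{J}$ display the structure as cofibrantly generated, completing the proof.
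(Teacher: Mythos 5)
First, a point of calibration: the paper contains no proof of this statement at all. It is J.~Smith's theorem, quoted verbatim from Beke (\cite{Bek}, Thm.~1.7) and used as a black box in the proof of Theorem 1.1, so there is no ``paper's own proof'' to compare yours against; the comparison has to be with the standard write-up of Smith's argument in \cite{Bek}. Measured against that, your architecture is faithful to the literature: the identification $\mathrm{inj}(\mathbf{I})=\mathcal{F}\cap\mathrm{W}$ of the trivial fibrations by factor-and-retract, the two small object arguments, retract-closure of the three classes, and the reduction of everything to producing a set $\mathbf{J}$ of trivial cofibrations with $\kappa$-presentable domains and codomains satisfying $\mathrm{cof}(\mathbf{J})=\mathrm{cof}(\mathbf{I})\cap\mathrm{W}$, which you correctly reduce further to showing that every relatively $\mathbf{I}$-cellular weak equivalence lies in $\mathrm{cof}(\mathbf{J})$.

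That last step, however, is precisely where your proposal stops being a proof. The ``general principle'' you invoke --- that an accessible, accessibly embedded class closed under pushout, transfinite composition and retracts is generated by its members with $\kappa$-presentable domain and codomain --- is not an off-the-shelf fact you may cite; it \emph{is} Smith's lemma, i.e.\ the entire mathematical content of the theorem, so appealing to it is circular. A complete argument must actually carry out what you compress into one sentence: given an $\mathbf{I}$-cellular $j'\in\mathrm{W}$, use the $\kappa$-accessibility of $\mathrm{W}$ and its closure under $\kappa$-filtered colimits to find, cofinally in the poset of $\kappa$-small sub-cell-complexes, relative inclusions that are again in $\mathrm{W}$, and assemble these into a transfinite composition of pushouts of maps in $\mathbf{J}$; moreover your gloss of $\mathrm{cof}(\mathbf{I})\cap\mathrm{W}$ as an ``intersection of accessible classes'' silently uses that $\mathrm{cof}(\mathbf{I})$ itself is accessible and accessibly embedded, which is another nontrivial theorem, not a formality. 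This combinatorial core is the bulk of Beke's Section 1 and cannot be waved through. Separately, your retract-closure argument for $\mathrm{W}$ contains a genuine error: an $\omega$-chain is $\aleph_0$-filtered but not $\kappa$-filtered for any $\kappa>\aleph_0$, so writing a retract as a countable sequential colimit proves nothing when $\mathrm{W}$ is only known to be closed under $\kappa$-filtered colimits for some large $\kappa$. The correct argument splits the idempotent as the colimit over the one-object idempotent category, which \emph{is} $\kappa$-filtered for every $\kappa$, since the idempotent itself furnishes a cocone over the whole diagram.
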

\begin{theorem} The class of weak equivalences of a combinatorial model category is accessible.
\end{theorem}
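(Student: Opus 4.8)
The plan is to reduce the property of being a weak equivalence to a lifting condition that is manifestly accessible, by exploiting the functorial factorisation produced by the small object argument. Write $\M$ for the combinatorial model category, so that $\M$ is locally presentable and cofibrantly generated; let $\mathbf{I}$ and $\mathbf{J}$ be sets of generating cofibrations and generating trivial cofibrations, so that $\mathrm{inj}(\mathbf{I})$ is the class of trivial fibrations and $\mathrm{inj}(\mathbf{J})$ the class of fibrations, while $\mathrm{cof}(\mathbf{J})$ is the class of trivial cofibrations. Since $\M$ is locally presentable, so is the arrow category $\mathrm{Mor}(\M)$, and the goal is to exhibit $\mathrm{W}$ as an accessible, accessibly embedded full subcategory of it.

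First I would run the small object argument on the set $\mathbf{J}$ to obtain a functorial factorisation of every map $f$ as $f=p_f\circ i_f$ with $i_f\in\mathrm{cof}(\mathbf{J})$ a trivial cofibration and $p_f\in\mathrm{inj}(\mathbf{J})$ a fibration. The key elementary observation is then the characterisation
\[
 f\in\mathrm{W}\quad\Longleftrightarrow\quad p_f\in\mathrm{inj}(\mathbf{I}).
\]
Indeed $i_f$ is a trivial cofibration, hence a weak equivalence, so by the three-for-two property $f\in\mathrm{W}$ iff $p_f\in\mathrm{W}$; and a fibration lies in $\mathrm{W}$ precisely when it is a trivial fibration, i.e.\ when it lies in $\mathrm{inj}(\mathbf{I})$. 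Thus, writing $P\colon\mathrm{Mor}(\M)\to\mathrm{Mor}(\M)$ for the functor $f\mapsto p_f$, we have $\mathrm{W}=P^{-1}\bigl(\mathrm{inj}(\mathbf{I})\bigr)$.

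Next I would assemble the two accessibility inputs. On one hand, the full subcategory $\mathrm{inj}(\mathbf{I})\subseteq\mathrm{Mor}(\M)$ of maps with the right lifting property against the set $\mathbf{I}$ is accessible and accessibly embedded: this is the standard fact that injectivity classes with respect to a set of morphisms in a locally presentable category are accessible. On the other hand, $P$ is an accessible functor: choosing a regular cardinal $\lambda$ for which the domains and codomains of the maps in $\mathbf{J}$ are $\lambda$-presentable, the small object argument realises the factorisation as a $\lambda$-indexed transfinite colimit of functors built from coproducts and pushouts of maps in $\mathbf{J}$, each preserving $\lambda$-filtered colimits, so that $P$ preserves $\lambda'$-filtered colimits for $\lambda'$ sufficiently large. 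Finally, since the preimage of an accessible, accessibly embedded subcategory under an accessible functor is again accessible and accessibly embedded---the relevant category being a pseudo-pullback, to which the limit theorem for accessible categories applies---the identification $\mathrm{W}=P^{-1}\bigl(\mathrm{inj}(\mathbf{I})\bigr)$ yields the claim.

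The main obstacle I anticipate is the accessibility of the factorisation functor $P$. One must set up the small object argument so that the transfinite construction terminates and produces a functor commuting with sufficiently filtered colimits; this rests on a uniform presentability bound for the cells in $\mathbf{J}$, which guarantees that a lifting square out of a map of $\mathbf{J}$ into a $\lambda$-filtered colimit factors through a stage, so that the coproducts indexing the construction commute with such colimits. This is precisely where local presentability and cofibrant generation---the two ingredients of combinatoriality---are indispensable; the remaining steps are formal consequences of the calculus of accessible categories and accessible functors.
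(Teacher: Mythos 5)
The paper does not actually prove Theorem 2.3; it only cites \cite{Lu} and \cite{Ros} for it. Your argument is correct and is essentially the proof found in those references: factor $f=p_f i_f$ functorially via the small object argument on $\mathbf{J}$, observe $\mathrm{W}=P^{-1}(\mathrm{inj}(\mathbf{I}))$, and combine accessibility of $P$ with accessibility of the injectivity class $\mathrm{inj}(\mathbf{I})$ (note that $f\in\mathrm{inj}(i)$ is literally an injectivity condition in $\mathrm{Mor}(\M)$ with respect to the map $(i,\mathrm{id})\colon i\to \mathrm{id}_{\mathrm{cod}(i)}$, so the standard theorem on injectivity classes in locally presentable categories applies directly), closing with the pseudo-pullback form of the limit theorem for accessible categories.
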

Proofs of the preceding theorem has been given in \cite{Lu} and \cite{Ros}. By general
arguments the forgetful functor $U:Comon(\mathcal{E})\rightarrow \mathcal{E}$ has a right
adjoint and the category $Comon(\mathcal{E})$ is locally presentable, see e.g.
(\cite{AR}, Remark below Lemma 2.76 and the dual of Corollary 2.75). We shall define a set
$\mathbf{I}$ which will generate the class of cofibrations and then check condition c1 of 2.1.

Let $C\in Comon(\mathcal{E})$. We say that $(D,i)\in Comon(\mathcal{E})/C$ is an
$\mathcal{E}$-\emph{subobject} of $C$ if $U(i):U(D)\rightarrow U(C)$ is a monomorphism.
As pointed out to us by Steve Lack, the $\mathcal{E}$-subobjects are precisely the strong subobjects in
$Comon(\mathcal{E})$. This can be seen using the left exactness of the monoidal product.

For example, if $f:C\rightarrow D$ is a map of comonoids, then the subobject
$m:Im(f)\rightarrowtail U(D)$ is an $\mathcal{E}$-subobject of $D$ and the canonical
epi $e:U(C)\rightarrow Im(f)$ is a map of comonoids. To see this, one uses the fact that
$Coim(f)\cong Im(f)$ and again the left exactness of the monoidal product. For $C$ and $D$
comonoids we write $C\preceq D$ if $C$ is an $\mathcal{E}$-subobject of $D$.

\begin{lemma}
There is a regular cardinal $\kappa$ such that every
comonoid $C$ is a $\kappa$-filtered colimit $C=\mathrm{colim}C_{i}$,
with $C_{i}\preceq C$.
\end{lemma}
\begin{proof}
The functor $U$ preserves and reflects epimorphisms. Let $\lambda$ be a regular cardinal
such that $Comon(\mathcal{E})$ is locally $\lambda$-presentable and let $C$ be a comonoid.
Write $C=\mathrm{colim}D_{i}$, with canonical  arrows $\varphi_{i}:D_{i}\rightarrow C$
and with $D_{i}$ $\lambda$-presentable. Factor $U(\varphi_{i})$ as
$U(D_{i})\overset{e_{i}}\rightarrow C_{i}\overset{m_{i}}
\rightarrow U(C)$, with $m_{i}$ mono and $e_{i}$ epi. By the above, $C_{i}\preceq D$ and one clearly
has $C=\mathrm{colim}C_{i}$. Since $Comon(\mathcal{E})$ is co-well-powered, there is a \emph{set}
(up to isomorphism) $Q$ of all quotients of all $\lambda$-presentable objects. Therefore there is a
regular cardinal $\kappa$ such that $Q$ is contained in the set of all $\kappa$-presentable
objects of $Comon(\mathcal{E})$.
\end{proof}
We define $\mathbf{I}$ to be the set of all isomorphism classes of cofibrations $A\rightarrow B$
with $B$ $\kappa$-presentable.
\begin{lemma} A map has the right lifting property with respect to
the cofibrations iff it has the right lifting property with respect
to the maps in $\mathbf{I}$.
\end{lemma}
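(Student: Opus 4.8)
The non-trivial direction is ``right lifting against $\mathbf{I}$ $\Rightarrow$ right lifting against every cofibration'', the converse being immediate since $\mathbf{I}$ consists of cofibrations. The plan is a Zorn argument running through the $\mathcal{E}$-subobjects of the codomain, powered by Lemma 2.3 and by the fact that, thanks to the left exactness of $\otimes$ and to $\mathcal{E}$ being abelian, intersections and unions of $\mathcal{E}$-subobjects are created by $U$. So let $f\colon X\to Y$ have the right lifting property against $\mathbf{I}$, let $i\colon A\rightarrowtail B$ be a cofibration (so $A\preceq B$), and fix a square with top $u\colon A\to X$ and bottom $v\colon B\to Y$. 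I would consider the poset $\mathcal{P}$ of partial lifts: pairs $(C,g)$ with $A\preceq C\preceq B$ and $g\colon C\to X$ a comonoid map satisfying $g|_A=u$ and $fg=v|_C$, ordered by extension. First I would check that chains in $\mathcal{P}$ have upper bounds: the underlying $\mathcal{E}$-subobjects of a chain form a directed family whose colimit is preserved by $U$ and is again an $\mathcal{E}$-subobject of $B$ (filtered colimits are exact in the abelian, locally presentable category $\mathcal{E}$, hence preserve monomorphisms), and the partial lifts glue along it. By Zorn's lemma there is a maximal $(C,g)$, and the crux is to show $C=B$.

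Suppose $C\neq B$. Writing $B=\mathrm{colim}\,B_j$ as in Lemma 2.3, with each $B_j\preceq B$ and $\kappa$-presentable, if every $B_j\preceq C$ then $B=\mathrm{colim}\,B_j\preceq C$ and $C=B$; hence some $B_j\not\preceq C$. The intersection $C\cap B_j$ inside $B$ is created by $U$ by the left exactness of the monoidal product, so $C\cap B_j\rightarrowtail B_j$ is a cofibration with $\kappa$-presentable codomain, i.e. a member of $\mathbf{I}$ up to isomorphism. The square
\[
\xymatrix{
C\cap B_j \ar[r] \ar[d] & C \ar[r]^{g} & X \ar[d]^{f}\\
B_j \ar[rr] & & Y
}
\]
with bottom edge $B_j\rightarrowtail B\xrightarrow{v}Y$ commutes, so the hypothesis yields a lift $h\colon B_j\to X$ with $fh=v|_{B_j}$ and $h|_{C\cap B_j}=g|_{C\cap B_j}$.

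It remains to glue $g$ and $h$. Since $\mathcal{E}$ is abelian, the union $D:=C\cup B_j\preceq B$ is the pushout of $C\leftarrow C\cap B_j\to B_j$, and as the left adjoint $U$ preserves it, $D$ is an $\mathcal{E}$-subobject of $B$ with underlying object $U(C)+U(B_j)$, strictly containing $C$ because $U(B_j)\not\subseteq U(C)$. As $g$ and $h$ agree on $C\cap B_j$, the pushout produces $\widetilde g\colon D\to X$ with $\widetilde g|_C=g$; since $fg$ and $fh$ both restrict $v$, one gets $f\widetilde g=v|_D$ and $\widetilde g|_A=u$. Thus $(D,\widetilde g)$ strictly exceeds $(C,g)$, contradicting maximality, so $C=B$ and $g$ is the required lift. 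The main obstacle is precisely the categorical bookkeeping that legitimizes running Zorn's argument inside $Comon(\mathcal{E})$ rather than in $\mathcal{E}$: one must know that directed unions, binary intersections and the binary union-as-pushout of $\mathcal{E}$-subobjects are all created by $U$. The first two rest on exactness of filtered colimits and on left exactness of $\otimes$ (which supplies the comonoid structure on the pullback), and the last on $\mathcal{E}$ being abelian together with $U$ preserving colimits; granting these, the only quantitative input needed is that each $B_j$ is $\kappa$-presentable, which is exactly what places $C\cap B_j\rightarrowtail B_j$ in $\mathbf{I}$.
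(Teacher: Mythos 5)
Your proposal is correct and follows essentially the same route as the paper's proof: the same Zorn argument over the poset of partial lifts indexed by $\mathcal{E}$-subobjects, the same use of Lemma 2.5 to form the intersection with a $\kappa$-presentable subobject from Lemma 2.3 (so that the resulting cofibration lies in $\mathbf{I}$), and the same pushout gluing to contradict maximality. The only cosmetic difference is that the paper phrases the last step positively (every $\kappa$-presentable $B\preceq D$ satisfies $B\preceq E$, hence $E\cong D$) rather than by contradiction.
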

To prove this lemma we need the following general result.
\begin{lemma}
Let $\mathcal{E}$ be an abelian and monoidal category with monoidal product $\otimes$
which is left exact in each variable. If $A\rightarrowtail X$ and $B\rightarrowtail Y$
are subobjects, then $A\otimes B=(A\otimes Y)\cap (X\otimes B)$. As a consequence,
if $i:D\rightarrow C$ and $j:E\rightarrow C$ are maps of comonoids in $\mathcal{E}$
such that $U(i)$ and $U(j)$ are monomorphisms, then $D\cap E$ is a comonoid in $\mathcal{E}$.
\end{lemma}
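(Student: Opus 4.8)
The plan is to prove the two assertions in turn, deriving the comonoid statement from the tensor identity. For the identity $A\otimes B=(A\otimes Y)\cap(X\otimes B)$ as subobjects of $X\otimes Y$, I would exploit that in an abelian category every subobject is a kernel and that a functor preserving finite limits sends kernels to kernels and monomorphisms to monomorphisms. Write $\iota_{A}\colon A\rightarrowtail X$ and $\iota_{B}\colon B\rightarrowtail Y$ for the inclusions and $q_{A}\colon X\to X/A$, $q_{B}\colon Y\to Y/B$ for their cokernels, so $A=\ker q_{A}$ and $B=\ker q_{B}$. Applying the left exact functors $X\otimes-$ and $A\otimes-$ gives $X\otimes B=\ker(X\otimes q_{B})$ and $A\otimes B=\ker(A\otimes q_{B})$; moreover $A\otimes Y\rightarrowtail X\otimes Y$ and $A\otimes(Y/B)\rightarrowtail X\otimes(Y/B)$ are monomorphisms, since tensoring with a fixed object preserves them.

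The key step is then the elementary fact that intersecting a subobject with a kernel is taking the kernel of a restriction: for $T\rightarrowtail Z$ and $p\colon Z\to W$ one has $\ker p\cap T=\ker\big(p\circ(T\rightarrowtail Z)\big)$, and precomposition inside a monomorphism does not change the kernel. I would apply this with $Z=X\otimes Y$, $p=X\otimes q_{B}$ and $T=A\otimes Y$. By bifunctoriality the composite $A\otimes Y\rightarrowtail X\otimes Y\xrightarrow{X\otimes q_{B}}X\otimes(Y/B)$ equals $A\otimes Y\xrightarrow{A\otimes q_{B}}A\otimes(Y/B)\xrightarrow{\iota_{A}\otimes(Y/B)}X\otimes(Y/B)$, whose second arrow is a monomorphism; hence $(A\otimes Y)\cap(X\otimes B)=\ker(A\otimes q_{B})=A\otimes B$, which is the assertion.

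For the consequence, write $F=D\cap E$, formed as the pullback $D\times_{C}E$ of the underlying subobjects in $\mathcal{E}$, and let $k\colon F\rightarrowtail C$ be the inclusion; the counit will be $\varepsilon_{C}\circ k$. The crucial identity is $(D\otimes D)\cap(E\otimes E)=F\otimes F$ inside $C\otimes C$, which I would obtain by combining the formula just proved with left exactness. Since $-\otimes C$ and $C\otimes-$ preserve the pullback defining $F$, one gets the same-side identities $(D\otimes C)\cap(E\otimes C)=F\otimes C$ and $(C\otimes D)\cap(C\otimes E)=C\otimes F$, while the proved formula gives $D\otimes D=(D\otimes C)\cap(C\otimes D)$, $E\otimes E=(E\otimes C)\cap(C\otimes E)$ and $(F\otimes C)\cap(C\otimes F)=F\otimes F$. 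Rearranging the resulting fourfold intersection in the subobject lattice yields
$$(D\otimes D)\cap(E\otimes E)=\big((D\otimes C)\cap(E\otimes C)\big)\cap\big((C\otimes D)\cap(C\otimes E)\big)=(F\otimes C)\cap(C\otimes F)=F\otimes F.$$
Because $D$ and $E$ are subcomonoids, $\Delta_{C}\circ k$ factors through $i\otimes i\colon D\otimes D\to C\otimes C$ and through $j\otimes j\colon E\otimes E\to C\otimes C$, hence through their intersection, which by the displayed identity is $F\otimes F$; this factorisation defines $\Delta_{F}\colon F\to F\otimes F$ with $(k\otimes k)\circ\Delta_{F}=\Delta_{C}\circ k$.

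It remains to check coassociativity and counitality. Each $k^{\otimes n}\colon F^{\otimes n}\to C^{\otimes n}$ is a monomorphism (tensoring preserves monomorphisms), so every comonoid identity for $F$ may be verified after composing with the relevant $k^{\otimes n}$: using $(k\otimes k)\circ\Delta_{F}=\Delta_{C}\circ k$ repeatedly, both sides are pushed forward to the corresponding valid identity for $C$, and cancelling the monomorphism $k^{\otimes n}$ gives the axiom for $F$; the same computation shows $k$ is a map of comonoids. I expect the genuine content to lie entirely in the first assertion, precisely in the reduction of the intersection $(A\otimes Y)\cap(X\otimes B)$ to the single kernel $\ker(A\otimes q_{B})$ after discarding the monomorphism $\iota_{A}\otimes(Y/B)$; once that formula is in hand, the passage to comonoids is a formal manipulation of the subobject lattice of $C\otimes C$ combined with left exactness of $\otimes$, the only point needing care being the legitimacy of the same-side intersection identities, which rests on $-\otimes C$ and $C\otimes-$ preserving the pullback defining $F$.
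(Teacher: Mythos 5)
Your proposal is correct and follows essentially the same route as the paper: first establish $A\otimes B=(A\otimes Y)\cap(X\otimes B)$ by exactness, then identify the pullback of $D\otimes D\rightarrow C\otimes C\leftarrow E\otimes E$ with $(D\cap E)\otimes(D\cap E)$ and let the comultiplications of $D$ and $E$ induce one on $D\cap E$. The only (harmless) variations are that you prove the first part by a direct kernel computation where the paper invokes the nine-lemma on the tensored $3\times 3$ diagram, and you phrase the second part as a regrouped fourfold meet in the subobject lattice of $C\otimes C$ where the paper computes the same thing as an iterated pullback.
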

\begin{proof}
For the first part, start with the short exact sequences
$0\rightarrow A\rightarrow X\rightarrow X/A\rightarrow 0$ and
$0\rightarrow B\rightarrow Y\rightarrow Y/B\rightarrow 0$. By tensoring them one
produces a $3\times 3$ diagram all whose rows and columns are exact. The assertion follows from the
nine-lemma. For the second part, consider the cube diagram in $\mathcal{E}$
\[
   \xymatrix{
D\cap E \ar[rr] \ar[dr] \ar[dd] & &
E \ar[dr]^{j} \ar[dd]\\
&  D \ar[rr]|-{i} \ar[dd] & & C \ar[dd]\\
P \ar[rr] \ar[dr] &  &
E\otimes E \ar[dr]^{j\otimes j}\\
&  D\otimes D \ar[rr]_{i\otimes i} & & C\otimes C\\
 }
   \]
in which the top and bottom faces are pullbacks. The bottom face can be calculated as an
iterated pullback
\[
   \xymatrix{
P \ar[r] \ar[d] & (D\cap E)\otimes E \ar[d] \ar[r] & E\otimes E \ar[d]^{j\otimes E}\\
D\otimes (D\cap E) \ar[r] \ar[d] & D\otimes E \ar[r] \ar[d] & C\otimes E \ar[d]^{C\otimes j}\\
D\otimes D \ar[r]_{D\otimes i} & D\otimes C \ar[r]_{i\otimes C} & C\otimes C,\\
}
   \]
therefore $P$ is $(D\cap E) \otimes (D\cap E)$ by the first part. This provides
$D\cap E$ a comultiplication. The counit of $D\cap E$ is the counit of $C$ restricted to $D\cap E$.
\end{proof}

\begin{proof} (of lemma 2.4) The proof is standard. Let
\[
   \xymatrix{
  C \ar[r]^{f} \ar[d]_{i} & X \ar[d]^{p}\\
  D \ar[r] & Y\\
  }
  \] be a commutative diagram with $i$ a cofibration and $p$ having
the right lifting property with respect to the maps in $\mathbf{I}$.
Let $S$ be the set consisting of pairs $(E,l)$, where $C\preceq E\preceq
D$ and $l:E\rightarrow X$ is a morphism making the diagram
\[
   \xymatrix{
  C \ar[rr] \ar[d] & & X \ar[d]^{p}\\
  E \ar[r] \ar[urr]^{l} & D \ar[r] & Y\\
  }
  \] commutative. We order $S$ by $(E,l)\leqslant (E',l')$ iff
$E\preceq E'$ and $l'$ is an extension of $l$. Then $S$ is
nonempty, as it contains $(C,f)$. Let $\mathcal{C}$ be any chain in $S$ and let $\kappa'$ be a regular
cardinal such that both $\mathcal{E}$ and $Comon(\mathcal{E})$ are locally $\kappa'$-presentable.
Then $\mathcal{C}$ is $\kappa'$-directed and therefore $\textrm{colim} \mathcal{C}$ is defined in $Comon(\mathcal{E})$,
and $U(\textrm{colim} \mathcal{C})$ is the colimit of the $U(F)$, $(F,m)\in \mathcal{C}$. Hence
$\textrm{colim} \mathcal{C}\rightarrow D$ is a cofibration. Also, we have a unique
$l:\textrm{colim} \mathcal{C}\rightarrow X$ extending each $m$, and clearly
$(\textrm{colim} \mathcal{C},l)$ is an element of $S$. This shows that Zorn's lemma is applicable,
therefore the set $S$ has a maximal element $(E,l)$. We are going to show that $E\cong D$ by showing
that for each $\kappa$-presentable comonoid $B\preceq D$, one has
$B\preceq E$. This suffices since $D$, being the $\kappa$-filtered
colimit of all of its $\mathcal{E}$-subobjects, is the least upper bound of its
$\kappa$-presentable $\mathcal{E}$-subobjects.

Take $B\preceq D$ with $B$ $\kappa$-presentable.
Using lemma 2.5 and the hypothesis we have a diagonal filler $d$ in the commutative diagram
\[
   \xymatrix{
E\cap B \ar[r] \ar[d] & E \ar[r]^{l} & X \ar[d]^{p}\\
B \ar[r] & D \ar[r] & Y.\\
  }
  \]
Therefore in the diagram
\[
   \xymatrix{
E\cap B \ar[r] \ar[d] & B \ar[d] \ar[ddr]^{d}\\
E \ar[r] \ar[drr]_{l} & E\cup B \ar[dr]^{l'}\\
& & X\\
  }
  \]
in which the square is a pushout, there is a map $l':E\cup B\rightarrow X$ extending $l$,
and so $(E\cup B,l')\in S$. This shows that $(E\cup B,l')\leqslant (E,l)$ since $(E,l)$ was maximal.
It follows that $B\preceq E$. \end{proof}

By performing the small object argument it follows from
lemma 2.4 and a retract argument that the class of cofibrations is the
class $Cof(\mathbf{I})$. It remains to check condition c1 of 2.1. For this we shall use
\\

\textbf{2.6. The dual of Quillen path-object argument.} \emph{Let
$\mathcal{E}$ be a model category and let
$$F:\mathcal{C}\rightleftarrows \mathcal{E}:G$$ be an adjoint pair ($F:\mathcal{C}\rightarrow \mathcal{E}$
is the left adjoint). We define a map $f$ of $\mathcal{C}$ to be a} $\textrm{cofibration}$
(\emph{resp.} $\textrm{weak equivalence}$) \emph{if $F(f)$ is such
in $\mathcal{E}$. Suppose that $\mathcal{C}$ is finitely cocomplete,
it has a cofibrant replacement functor and a functorial cylinder
object for cofibrant objects. Then a map of $\mathcal{C}$ that has
the right lifting property with respect to all cofibrations is a
weak equivalence.}

\begin{proof} We recall its proof for the sake of completeness. Let
$f:X\rightarrow Y$ be map of $\mathcal{C}$ which has the right
lifting property with respect to all cofibrations. Let
\[
   \xymatrix{
  \hat{C}X \ar[r]^{i_{X}} \ar[d]_{\hat{C}(f)} & X \ar[d]^{f}\\
  \hat{C}Y \ar[r]^{i_{Y}} & Y\\
  }
  \]
be the cofibrant replacement of $f$. Then the diagram
\[
   \xymatrix{
  \emptyset \ar[r] \ar[d] & \hat{C}X \ar[r]^{i_{X}} & X \ar[d]^{f}\\
  \hat{C}Y \ar@{=}[r] & \hat{C}Y \ar[r]^{i_{Y}} & Y\\
  }
  \]
has a diagonal filler $d$. Let $\hat{C}X \sqcup \hat{C}X
\overset{i_{0}\sqcup i_{1}}\longrightarrow Cyl(\hat{C}X)
\overset{p}\rightarrow \hat{C}X$ be the cylinder object for
$\hat{C}X$. Consider the commutative diagram
\[
   \xymatrix{
  \hat{C}X \sqcup \hat{C}X \ar[rr]^{(d\hat{C}(f),i_{X})}
   \ar[d]_{i_{0}\sqcup i_{1}} & & X \ar[d]^{f}\\
  Cyl(\hat{C}X) \ar[rr]^{fi_{X}p} & & Y.\\
  }
  \]
By hypothesis it has a diagonal filler $H$, and so $d\hat{C}(f)$ is
a weak equivalence. Since the weak equivalences of $\mathcal{E}$
satisfy the two out of six property, it follows that $d$ is a weak
equivalence.
\end{proof}
We return to the proof of 1.1. By 2.6 it suffices to show that there is
a functorial cylinder object for comonoids. This is guaranteed by hypotheses
($iii$) and ($iv$). The proof of theorem 1.1 is complete.
\\

\textbf{Remark 2.7.}  Let $\mathcal{E}$ be as in the statement of theorem
1.1. If, moreover, the cylinder object $Cyl(I)$ for $I$ is a cocommutative comonoid,
then the category $CComon(\mathcal{E})$ of cocommutative comonoids in $\mathcal{E}$ admits a
model category structure in which a map is a weak equivalence (resp.
cofibration) if and only if the underlying map is a weak equivalence (resp.
monomorphism) in $\mathcal{E}$.
\\

\textbf{Examples.} $(a)$ Let $R$ be a commutative von Neumann regular ring and let
$Ch(R)$ be the category of unbounded chain complexes of $R$-modules.
We consider on $Ch(R)$ the projective and injective model structures
\cite{Ho}. $Ch(R)$ has a well-known coalgebra interval given by
$$...\rightarrow 0\rightarrow Re\overset{\partial}\rightarrow
Ra\oplus Rb \rightarrow 0\rightarrow ...,$$ where $\partial(e)=b-a$
and $Ra\oplus Rb$ is in degree 0. The maps $i_{0}$ and $i_{1}$ are
the inclusions and the map $p$ is $a,b \mapsto 1$, see e.g.
(\cite{SS}, section 5). The last part of ($i$) is shown in
(\cite{Sh}, Proof of Prop. 3.3 for $Ch$).

$(b)$ The above considerations apply to the category of non-negatively
graded chain complexes as well.

\section{Comodules}

Let $\mathcal{E}$ be a monoidal category with monoidal product $\otimes$. Given a
(coassociative and counital) comonoid $C$ in $\mathcal{E}$, we denote by $Mod^{C}$
the category of right $C$-comodules in $\mathcal{E}$. There is a forgetful-cofree adjunction
$$U:Mod^{C}\rightleftarrows \mathcal{E}:-\otimes C \qquad \qquad \qquad (1)$$

\begin{theorem}
Let $\mathcal{E}$ be a cofibrantly generated monoidal model category and let
$C$ be a (coassociative and counital) comonoid in $\mathcal{E}$. Suppose that

$(i)$ the cofibrations of the model structure are precisely the monomorphisms;

$(ii)$ $\mathcal{E}$ is locally presentable, abelian, and for each object $X$ of $\mathcal{E}$
the functor $-\otimes X$ is left exact, where $\otimes$ denotes the monoidal product
of $\mathcal{E}$.

Then $Mod^{C}$ admits a cofibrantly generated model structure in which a map $f$
is a weak equivalence (resp. cofibration) if and only if the underlying map
is a weak equivalence (resp. monomorphism) in $\mathcal{E}$.
\end{theorem}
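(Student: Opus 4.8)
The plan is to mimic the proof of Theorem 1.1 almost verbatim, replacing $Comon(\mathcal{E})$ by $Mod^{C}$ and the forgetful functor $U:Comon(\mathcal{E})\to\mathcal{E}$ by the forgetful functor of the adjunction $(1)$, which is again a left adjoint and so preserves colimits, in particular the initial object and all epimorphisms. First I would record that $Mod^{C}$ is locally presentable: since $\mathcal{E}$ is closed, $-\otimes C$ preserves colimits, so $Mod^{C}$, being the category of coalgebras for the comonad $-\otimes C$ on the locally presentable category $\mathcal{E}$, is locally presentable by the same arguments cited for $Comon(\mathcal{E})$. As there, call $(D,i)\in Mod^{C}/M$ an $\mathcal{E}$-subcomodule, written $D\preceq M$, if $U(i)$ is a monomorphism, and check that the image of a comodule map $f:M\to N$ is an $\mathcal{E}$-subcomodule of $N$. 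Factoring $U(f)$ as $U(M)\overset{e}{\twoheadrightarrow}Im(f)\overset{m}{\rightarrowtail}U(N)$ in the abelian category $\mathcal{E}$, the composite $(e\otimes C)\rho_{M}$ annihilates $\ker f=\ker e$ (using the comodule identity $\rho_{N}f=(f\otimes C)\rho_{M}$ and that $m\otimes C$ is monic, as $-\otimes C$ is left exact), hence factors through $Coim(f)\cong Im(f)$ to equip $Im(f)$ with a coaction for which $e$ is a comodule map. This is the exact analogue of the displayed example following Lemma 2.3.

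Next I would establish the two lemmas on which the argument rests. The analogue of Lemma 2.3 --- every comodule $M$ is a $\kappa$-filtered colimit $M=\mathrm{colim}\,M_{i}$ with $M_{i}\preceq M$ --- goes through unchanged: write $M$ as a $\lambda$-filtered colimit of $\lambda$-presentable comodules, take images as above, and use that the locally presentable, hence co-well-powered, category $Mod^{C}$ has only a set of quotients of $\lambda$-presentable objects to produce the cardinal $\kappa$. Define $\mathbf{I}$ to be the set of isomorphism classes of cofibrations $A\to B$ with $B$ $\kappa$-presentable. The analogue of Lemma 2.5 is in fact easier here: if $i:D\to M$ and $j:E\to M$ are comodule maps with $U(i),U(j)$ monomorphisms, then since $-\otimes C$ is left exact it preserves both the monomorphisms and the pullback $D\cap E=D\times_{M}E$, whence $(D\cap E)\otimes C=(D\otimes C)\cap(E\otimes C)$ inside $M\otimes C$; as $\rho_{M}$ restricted to $D$ (resp. $E$) lands in $D\otimes C$ (resp. $E\otimes C$), its restriction to $D\cap E$ lands in $(D\cap E)\otimes C$, making $D\cap E$ an $\mathcal{E}$-subcomodule. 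With these in hand, the Zorn's-lemma argument proving Lemma 2.4 --- a map has the right lifting property against the cofibrations iff it does against $\mathbf{I}$ --- is copied word for word, and the small object argument together with a retract argument identifies the cofibrations with $Cof(\mathbf{I})$.

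It then remains to verify conditions c0, c1, c2 of Theorem 2.1 for $\mathrm{W}:=U^{-1}(\text{weak equivalences of }\mathcal{E})$. Conditions c0 and c2 are immediate: three-for-two, closure under pushout and closure under transfinite composition hold because $U$ creates these classes and preserves colimits, while the corresponding classes of monomorphisms and trivial cofibrations of $\mathcal{E}$ enjoy the same closure. The content is c1, namely $\mathrm{inj}(\mathbf{I})\subseteq\mathrm{W}$, which by the Lemma 2.4 analogue amounts to showing that a comodule map with the right lifting property against all cofibrations is a weak equivalence. For this I would invoke the dual path-object argument 2.6 applied to the adjunction $(1)$. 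Its hypotheses are met by the following observation, the crux of the proof: because the cofibrations of $\mathcal{E}$ are exactly the monomorphisms and the initial object of the abelian category $\mathcal{E}$ is $0$, every object of $\mathcal{E}$ is cofibrant, and since $U$ preserves the initial object every comodule is cofibrant as well; hence the cofibrant replacement functor may be taken to be the identity. For the required functorial cylinder, fix a cylinder $I\sqcup I\to Cyl(I)\to I$ for the unit in $\mathcal{E}$ and set $Cyl(M):=Cyl(I)\otimes M$, with coaction $Cyl(I)\otimes\rho_{M}$. This is functorial in $M$; the map $M\sqcup M\cong(I\sqcup I)\otimes M\to Cyl(I)\otimes M$ is a monomorphism because $-\otimes U(M)$ is left exact, and the projection $Cyl(I)\otimes M\to I\otimes M\cong M$ is a weak equivalence because $-\otimes U(M)$ is left Quillen (the pushout-product axiom applied to $0\to U(M)$, all objects being cofibrant) and hence carries the weak equivalence $Cyl(I)\to I$ to a weak equivalence. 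This yields c1 and completes the verification.

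The main obstacle is precisely this last step. Unlike in Theorem 1.1, no hypothesis furnishes a coalgebra interval, so the functorial cylinder must be built by hand; the decisive simplification is the recognition that, the cofibrations being the monomorphisms of an abelian category, \emph{every} object is cofibrant, which both trivializes cofibrant replacement and makes $-\otimes M$ homotopically well behaved, so that tensoring a fixed cylinder for the unit with $M$ suffices. By contrast, the intersection lemma, delicate in the comonoid case, collapses here to a one-line consequence of the left exactness of $-\otimes C$.
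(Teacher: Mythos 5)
Your proposal is correct, but it reaches condition c1 by a genuinely different route than the paper. You follow the Theorem 1.1 template to the end: observing that, since the cofibrations of $\mathcal{E}$ are the monomorphisms of an abelian category, every object (hence every comodule) is cofibrant, you build a functorial cylinder $Cyl(I)\otimes M$ by tensoring a cylinder for the unit with $M$ (no coalgebra-interval hypothesis being needed, since the coaction on $Cyl(I)\otimes M$ comes from $M$ alone), and then invoke the dual path-object argument 2.6. The paper instead abandons the cylinder at this point and proves Lemma 3.2 directly, following Getzler--Goerss: it factors $U(M)\rightarrow 0$ as a monomorphism $i:U(M)\rightarrow X$ followed by a trivial fibration, and factors any comodule map $f:M\rightarrow N$ as $M\xrightarrow{(i^{*},f)}N\times(X\otimes C)\xrightarrow{p_{1}}N$, where lifting against $p_{1}$ reduces by the adjunction $U\dashv -\otimes C$ to lifting against $X\rightarrow 0$ in $\mathcal{E}$; a retract argument then identifies the maps with the right lifting property against all cofibrations with the trivial fibrations, giving c1. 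The paper's route buys an explicit weak factorisation system (cofibrations, trivial fibrations) and an explicit description of trivial fibrations as retracts of such projections; your route buys uniformity with the comonoid case and isolates exactly why the comodule situation is easier (all objects cofibrant, cylinder obtained by tensoring), and your simplification of the intersection lemma via left exactness of $-\otimes C$ is sound. The only point you share with the paper in leaving implicit is the accessibility of $\mathrm{W}=U^{-1}(\mathrm{W}_{\mathcal{E}})$ required by Theorem 2.1, which follows from Theorem 2.2 and standard preservation of accessibility under inverse image along an accessible functor.
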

The proof of the above theorem follows the same steps as the proof of 1.1,
except that condition c1 of 2.1 will be a consequence of lemma 3.2 below.
\\

We say that a map of $C$-comodules is a {\bf fibration} if
it has the right lifting property with respect to the maps which are
both cofibrations and weak equivalences. We say that a map of $C$-comodules
is a  {\bf trivial fibration} if it is both a fibration and a weak equivalence.
\begin{lemma}
The category $Mod^{C}$ has a weak factorisation system (cofibrations, trivial fibrations).
\end{lemma}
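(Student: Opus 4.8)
The plan is to obtain the weak factorisation system from the small object argument applied to a generating set $\mathbf{I}$, using the dual Quillen path-object argument 2.6 to control the right class. Exactly as in Section 2, let $\mathbf{I}$ be the set of isomorphism classes of cofibrations $A\to B$ of $C$-comodules with $B$ $\kappa$-presentable. First I would prove the comodule analogue of Lemma 2.4, namely that a map of $Mod^{C}$ has the right lifting property with respect to all cofibrations if and only if it does so with respect to $\mathbf{I}$. The argument is the Zorn's lemma argument already given for comonoids, the only new input being the comodule version of Lemma 2.5: the intersection of two sub-comodules is again a sub-comodule, which follows from the left exactness of $-\otimes X$ in hypothesis $(ii)$. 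A retract argument and the small object argument then give $\mathrm{cof}(\mathbf{I})=$ cofibrations together with a functorial factorisation of every map as a cofibration followed by a map in $\mathrm{inj}(\mathbf{I})$, the latter being exactly the maps with the right lifting property against all cofibrations.

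The crux is to apply 2.6 to the adjunction $U\dashv (-\otimes C)$ of $(1)$. Since $\mathcal{E}$ is abelian, its initial object is a zero object, and $U$, being a left adjoint, preserves it; as $0\to U(A)$ is always a monomorphism, every $C$-comodule is cofibrant and the cofibrant replacement functor may be taken to be the identity. For a functorial cylinder object I would fix a cylinder $I\sqcup I\xrightarrow{i_{0}\sqcup i_{1}} Cyl(I)\xrightarrow{p} I$ of the monoidal unit in $\mathcal{E}$ (a functorial factorisation of the fold map) and set $Cyl(A):=Cyl(I)\otimes A$, which is a right $C$-comodule via the coaction on the factor $A$. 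Crucially, no coalgebra interval is needed, since tensoring a comodule with any object of $\mathcal{E}$ is again a comodule; this is what makes the comodule case lighter than the comonoid case. Applying $U$ to the endpoint inclusion yields $(i_{0}\sqcup i_{1})\otimes U(A)$, a monomorphism because $-\otimes U(A)$ is left exact and hence preserves monomorphisms, so the inclusion is a cofibration. Applying $U$ to the projection yields $p\otimes U(A)$, a weak equivalence because $p$ is a weak equivalence between cofibrant objects of $\mathcal{E}$ (all objects of $\mathcal{E}$ are cofibrant) and $-\otimes U(A)$ is left Quillen, by the pushout-product axiom and the cofibrancy of $U(A)$, so that Ken Brown's lemma applies. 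Hence the hypotheses of 2.6 are met and every map in $\mathrm{inj}(\mathbf{I})$ is a weak equivalence; this is also precisely condition c1 of 2.1, which is where the lemma feeds into the proof of Theorem 3.1.

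It remains to identify $\mathrm{inj}(\mathbf{I})$ with the trivial fibrations. A map in $\mathrm{inj}(\mathbf{I})$ lifts against all cofibrations, in particular against the trivial cofibrations, so it is a fibration, and it is a weak equivalence by the previous paragraph; hence it is a trivial fibration. Conversely, given a trivial fibration $f$, I would factor $f=qi$ with $i$ a cofibration and $q\in\mathrm{inj}(\mathbf{I})$; since $q$ and $f$ are weak equivalences, three-for-two forces $i$ to be a trivial cofibration, and the fibration property of $f$ produces a lift exhibiting $f$ as a retract of $q$, whence $f\in\mathrm{inj}(\mathbf{I})$ by closure of right lifting classes under retracts. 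Together with $\mathrm{cof}(\mathbf{I})=$ cofibrations this shows that (cofibrations, trivial fibrations) is the weak factorisation system delivered by the small object argument. The main obstacle is the middle paragraph: producing a single functorial cylinder whose endpoint inclusion remains a monomorphism (which rests on left exactness of the monoidal product) while its projection becomes a weak equivalence after applying $U$ (which rests on the compatibility of the monoidal and model structures together with every object being cofibrant). Once both properties are secured, the Lemma 2.4 analogue, the small object argument, and the retract argument are all routine.
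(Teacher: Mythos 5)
Your proposal is correct in its essentials, but it takes a genuinely different route from the paper. The paper's proof of Lemma 3.2 is a direct construction borrowed from Getzler--Goerss: given $f:M\rightarrow N$, one factors $U(M)\rightarrow 0$ in $\mathcal{E}$ as a monomorphism followed by a trivial fibration $U(M)\overset{i}\rightarrow X\rightarrow 0$, and then factors $f$ as $M\overset{j}\rightarrow N\times (X\otimes C)\overset{p_{1}}\rightarrow N$ with $j=(i^{*},f)$. The cofree comodule $X\otimes C$ does all the work: $j$ is a monomorphism because $i=\epsilon_{X}U(p_{2}j)$ is, and $p_{1}$ has the right lifting property against all cofibrations because, by the adjunction $(1)$, any lifting problem against $p_{1}$ over $N$ transposes to a lifting problem of a monomorphism against the trivial fibration $X\rightarrow 0$ in $\mathcal{E}$. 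The retract argument then identifies the right class with the trivial fibrations exactly as in your last paragraph. This argument needs no generating set, no Zorn's lemma, and no cylinder; it is entirely elementary given the adjunction. Your proof instead routes everything through the small object argument on the set $\mathbf{I}$ (the comodule analogue of Lemmas 2.3--2.5) and then invokes 2.6 with the cylinder $Cyl(I)\otimes A$ to show that $\mathrm{inj}(\mathbf{I})\subseteq \mathrm{W}$. This works --- your key observations are sound: every comodule is cofibrant since the initial comodule is $0$; $Cyl(I)\otimes A$ is a $C$-comodule via the coaction on $A$ with no coalgebra interval required; the endpoint inclusion is a monomorphism by left exactness; and $p\otimes U(A)$ is a weak equivalence by Ken Brown's lemma since $-\otimes U(A)$ is left Quillen and all objects are cofibrant. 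What the paper's route buys is independence of Lemma 3.2 from the presentability machinery (which is still needed separately to prove Theorem 3.1, where the paper uses Lemma 3.2 precisely to verify condition c1 of 2.1); what your route buys is a uniform treatment parallel to Section 2, at the cost of front-loading the Zorn's lemma argument into the proof of this lemma and of invoking the full monoidal model structure (pushout-product axiom) where the paper only uses the adjunction and the factorization axioms of $\mathcal{E}$.
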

\begin{proof}
We follow an idea from \cite{GG}. Let $f:M\rightarrow N$ be a map of $C$-comodules.
We factor the map $U(M)\rightarrow 0$ as a monomorphism followed by a
trivial fibration $U(M)\overset{i}\rightarrow X\rightarrow 0$.
Then $f$ factors as $$M\overset{j}\rightarrow
N\times (X\otimes C) \overset{p_{1}}\rightarrow N$$ where $j=(i^{*},f)$,
$i^{*}$ is the adjoint transpose of $i$ and $p_{1}:N\times (X\otimes C)
\rightarrow N$ is the projection. The map $p_{1}$ is a weak equivalence since
it is the map $N\oplus (X\otimes C)\rightarrow N\oplus (0\otimes C)\cong N$,
which is a weak equivalence. We show that the underlying map of $j$
is a monomorphism. One has $i=\epsilon_{X}U(p_{2}j)$, where
$\epsilon_{X}$ is the counit of the adjunction (1) and
$p_{2}:N\times (X\otimes C) \rightarrow (X\otimes C)$ is the projection.
Therefore $j$ is a cofibration. Next we show that $p_{1}:N\times (X\otimes C)
\rightarrow N$ has the right lifting property with respect to all cofibrations.
Let
\[
   \xymatrix{
M' \ar[r] \ar[d]_{k}  & N\times (X\otimes C) \ar[d]^{p_{1}}\\
N' \ar[r] & N\\
}
   \]
be a commutative diagram with $k$ a cofibration. This diagram has a
diagonal filler if and only if the diagram
\[
   \xymatrix{
  U(M') \ar[r] \ar[d]_{U(k)} & X \ar[d]\\
  U(N') \ar[r] & 0\\
}
   \]
has one. The latter is true by the assumption on $X$.
Therefore $p_{1}$ is a fibration. Let now $f:M\rightarrow N$ be a
trivial fibration. Factor it as above $M\overset{j}\rightarrow
N\times (X\otimes C) \overset{p_{1}}\rightarrow N$. Since
$j$ is a weak equivalence, there is a diagonal filler in the diagram
\[
   \xymatrix{
  M \ar @{=} [r] \ar[d]_{j} & M \ar[d]^{f}\\
 N\times (X\otimes C) \ar[r] \ar @{..>} [ur] & N\\
}
   \]
hence $f$ is a (domain) retract of a map which has
the right lifting property with respect to all cofibrations, therefore
$f$ has the right lifting property with respect to all cofibrations.
Conversely, let $f:M\rightarrow N$ have the right lifting property
with respect to all cofibrations. The same argument shows that $f$ is a
(domain) retract of a trivial fibration, hence $f$ is a trivial fibration.
\end{proof}

\textbf{Acknowledgements.} We are indebted to Professor Michael
Makkai for many useful discussions and suggestions. We thank the referee
for useful comments.

\end{document}